\newtheorem*{rep@theorem}{\rep@title}
\newcommand{\newreptheorem}[2]{%
\newenvironment{rep#1}[1]{%
 \def\rep@title{#2 \ref{##1}}%
 \begin{rep@theorem}}%
 {\end{rep@theorem}}}
\definecolor{RedOrange}{cmyk}{ 0, 0.77, 0.87, 0}
\definecolor{RoyalPurple}{cmyk}{ 0.84, 0.53, 0, 0}
\definecolor{YellowGreen}{cmyk}{ 0.44, 0, 0.74, 0}
\definecolor{Fuchsia}{cmyk}{ 0.47, 0.91, 0, 0.08}
\definecolor{Blue}{cmyk}{ 0.84, 0.53, 0, 0}
\definecolor{BlueViolet}{cmyk}{ 0.84, 0.53, 0, 0}
\definecolor{Black}{cmyk}{ 0.75, 0.68, 0.67, 0.9}
\newcommand{\R}{\mathbb{R}}
\newcommand{\B}{\mathbb{B}}
\newcommand{\N}{\mathbb{N}}
\newcommand{\e}{\varepsilon}
\newcommand{\E}{\mathbb{E}}
\newcommand{\Z}{\mathbb{Z}}
\renewcommand{\P}{\mathbb{P}}
\newcommand{\lin}{\left[\kern-0.15em\left[}
\newcommand{\rin} {\right]\kern-0.15em\right]}
\newcommand{\linf}{[\kern-0.15em [}
\newcommand{\rinf} {]\kern-0.15em ]}
\newcommand{\ilin}{\left]\kern-0.15em\left]}
\newcommand{\irin} {\right[\kern-0.15em\right[}
\newcommand{\secno}[1]{\thesection.\arabic{#1}}
\renewcommand{\tilde}{\widetilde}
\newtheorem{lem}{Lemma}[section]
\newtheorem{remark}[lem]{Remark}
\newtheorem{thm}[lem]{Theorem}
\newtheorem {Def}[lem] {Definition}
\definecolor{lilas}{RGB}{182, 102, 210}
\numberwithin{equation}{section}
\title[Divergence of shape fluctuation]
{Divergence of shape fluctuation for general distributions in
First-passage percolation}
\date{\today}
\author{Shuta Nakajima} 
\address[Shuta Nakajima]
{Graduate School of Mathematics, University Nagoya.}
\email{njima@math.nagoya-u.ac.jp}
\keywords{First-passage percolation, shape fluctuation.}
\subjclass[2010]{Primary 60K37; secondary 60K35; 82A51; 82D30}
\begin{document}
\maketitle

\begin{abstract}
  We study the shape fluctuation in the first-passage percolation on $\Z^d$. It is known that it diverges when the distribution obeys Bernoulli in [Yu Zhang. The divergence of fluctuations for shape in
first passage percolation. {\em Probab. Theory. Related. Fields.} 136(2)
298--320, 2006]. In this paper, we extend the result to general distributions. 
\end{abstract}

\section{Introduction}
First-passage percolation is a random growth model, which was first introduced by Hammersley and Welsh in 1965. The model is defined as follows. The vertices are the elements of $\Z^d$. Let us denote the set edges by $E^d$:
$$E^d=\{\{ v,w \}|~v,w\in\Z^d,~|v-w|_1=1\},$$
where we set $|v-w|_1=\sum^d_{i=1}|v_i-w_i|$ for $v=(v_1,\cdots,v_d)$, $w=(w_1,\cdots,w_d)$. Note that we consider non-oriented edges in this paper, i.e., $\{ v,w \}=\{ w,v \}$ and we sometimes regard $\{ v,w \}$ as a subset of $\Z^d$ with a slight abuse of notation. We assign a non-negative random variable $\tau_e$ on each edge $e\in E^d$, called the passage time of the edge $e$. The collection $\tau=\{\tau_e\}_{e\in E^d}$ is assumed to be independent and identically distributed with common distribution $F$. \\
    
  A path $\gamma$ is a finite sequence of vertices $(x_1,\cdots,x_l)\subset\Z^d$ such that for any $i\in\{1,\cdots,l-1\}$, $\{x_i,x_{i+1}\}\in E^d$. It is customary to regard a path as a subset of edges as follows: given an edge $e\in E^d$, we write $e\in \gamma$ if there exists $i\in \{1\cdots,l-1\}$ such that $e=\{x_{i},x_{i+1}\}$.\\

  Given a path $\gamma$, we define the passage time of $\gamma$ as
$$T(\gamma)=\sum_{e\in\gamma}\tau_e.$$
For $x\in\R^d$, we set $[x]=([x_1],\cdots,[x_d])$  where $[a]$ is the greatest integer less than or equal to $a$. Given two vertices $v,w\in\R^d$, we define the {\em first passage time} between vertices $v$ and $w$ as
$$T(v,w)=\inf_{\gamma:[v]\to [w]}T(\gamma),$$
where the infimum is taken over all finite paths $\gamma$ starting at $[v]$ and ending at $[w]$. A path $\gamma$ from $v$ to $w$ is said to be {\em optimal} if it attains the first passage time, i.e., $T(\gamma)=T(v,w)$.\\ 

By Kingman's subadditive ergodic theorem, if $\E \tau_e<\infty$, for any $x\in\R^d$, there exists a non-random constant $g(x)\ge 0$ such that

\begin{align}\label{kingman}
  g(x)=\lim_{t\to\infty}t^{-1} T(0,t x)=\lim_{t\to\infty}t^{-1} \E[T(0,t x)]\hspace{4mm}a.s.
\end{align}
This $g(x)$ is called the {\em time constant}. Note that, by subadditivity, if $x\in\Z^d$, then $g(x)\le \E T(0,x)$ and moreover for any $x\in\R^d$, $g(x)\le \E T(0,x)+2d\E \tau_e$. It is easy to check homogeneity and convexity: $g(\lambda x)=\lambda g(x)$ and $g(r x+(1-r)y)\le r g(x)+(1-r)g(y)$ for $\lambda\in\R$, $r\in[0,1]$ and $x,y\in\R^d$. It is well-known that if $F(0)<p_c(d)$, then $g(x)>0$ for any $x\neq 0$ \cite{Kes86}. Therefore, if $F(0)<p_c(d)$, then $g:\R^d\to \R_{\geq 0}$ is a norm.\\
  
 \subsection{Background and related works}\label{subsec:his}
 
    We define $B(t)=\{x\in\R^d|~T(0,x)\leq t\}$ as the fluid region starting from the origin at time $t$. Let $\B_d=\{x\in\R^d|~g(x)\leq 1\}.$ Cox and Durrett proved the following shape theorem~\cite{CD81}: If $F(0)<p_c(d)$ and $\E\tau_e<\infty$, for any $\e>0$,
\begin{equation}\label{shape-thm1}
  \lim_{t\to\infty}\P\left(t(1-\e)\B_d \subset B(t)\subset t(1+\e)\B_d)\right)=1.
\end{equation}
 Since the result of \eqref{shape-thm1} corresponds to the law of large number of $B(t)$, the next step is to consider the rate of the convergence, that is the minimum value $f(t)$ satisfying $(t-f(t))\B_d \subset B(t)\subset (t+f(t))\B_d$, which is called the shape fluctuation denoted by ${\rm F}(B(t),t\B_d)$ (we will extend the definition to more general forms in Definition \ref{Def: fluc}).\\

  
   Due to the works of Kesten \cite{Kes93} and Alexander \cite{Alex97}, the shape fluctuation is $O((t\log{t})^{1/2})$ for any dimension. The first attempt for the lower bound was due to Pemantle and Peres \cite{PP94} where they proved that if $F$ is exponential distribution and $d=2$, then the shape fluctuation diverges. Thereafter, Chatterjee \cite{Chatt17}  proved that under mild smoothness and decay assumptions on the edge weight distribution, the shape fluctuation grows at least $t^{1/8-o(1)}$ for $d=2$.\\

   On the other hand, these problems also have interesting features in higher dimensions. Some physicists predicted that if $d$ is sufficiently large, the fluctuation does not diverge in some sense. See the introduction of \cite{NP95}. The scaling limits in higher dimensions are controversial issues even in physics and there are some candidates. See \cite{AJ15} and references therein.  However, Zhang showed that if $\tau$ obeys the Bernoulli distribution, the shape fluctuation diverges \cite{Zhang06}. Indeed he showed that for any sufficiently small $c>0$, there exists $C>0$ such that for any $\Gamma\subset \R^d$,
\begin{figure}[b]
  \scalebox{0.55}{\includegraphics[width=7.0cm]{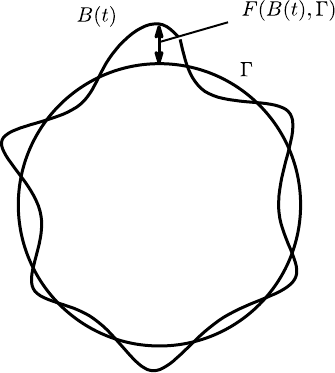}}\hspace{6mm}
  \scalebox{0.55}{\includegraphics[width=8.0cm]{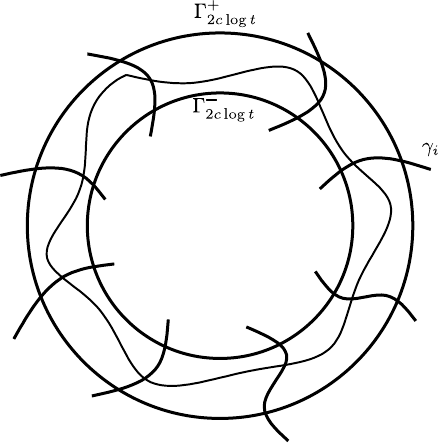}}
  \caption{}
  \label{fig:shape-fluc}
  Schematic picture of the shape fluctuation.
  \label{fig:two}
\end{figure}
 \begin{equation}\label{Zhang:shape}
   \P({\rm F}(B(t),\Gamma)\le c\log t)\leq  Ct^{-d+2-2c\log{p}},
 \end{equation}
 where $p=\P(\tau_e=0)=1-\P(\tau_e=1)$. Note that the bound is meaningful only when $d>2$. (Although \eqref{Zhang:shape} is stated without any restriction to $\Gamma$, it seems that a certain natural restriction such as convexity is required as in Theorem \ref{thm:shape}.)  His method relies on Russo's formula and it seems not easily extended directly to general distributions. In this paper, a different approach is taken to overcome this problem. Indeed, we apply a variant of the resampling argument introduced by van den Berg and Kesten \cite{BK93} and use it inductively to get the stretched-exponential bound. As a result, we prove the statement not only for general useful distributions but also a stronger estimate. It is worth noting that our model includes the  Eden or Richardson model.\\

 We consider the fluctuation from general convex sets following \cite{Zhang06}.
\begin{Def}\label{Def: fluc}
   For $l>0$ and a subset $\Gamma$ of $\R^d$, let
   $$\Gamma^-_l=\{v\in\Gamma|~d(v,\Gamma^c)\geq{}l\}\text{ and }\Gamma^+_l=\{v\in\R^d|~d(v,\Gamma)\leq{}l\},$$
   where $d$ is the Euclidean distance. Given three sets $A,B,C\subset \R^d$, we define the fluctuation of $A$ from $B$ inside $C$ as
    $${\rm F}_C(A,B)=\inf\{\delta>0|~B^-_{\delta}\cap C\subset A\cap C \subset B^+_{\delta}\cap C\}.$$
\end{Def}
\begin{remark}
  If $A,B,C$ are convex subsets, ${\rm F}_C(A,B)$ coincides with the Hausdorff distance $d_H(A\cap C,B\cap C)$. Although they do not coincide in general, the same proof still works with a suitable modification and the results below hold even when we replace ${\rm F}_C(A,B)$ by $d_H(A\cap C,B\cap C)$. 
  \end{remark}
When $A=B(t),B=t\B_d,C=\R^d$, the fluctuation ${\rm F}_{\R^d}(B(t),t\B_d)$ is simply the shape fluctuation ${\rm F}(B(t),t\B_d)$ mentioned above. To consider the directional shape fluctuation, we define the following cone. 
  \begin{Def}
    Given $\theta\in \R^d$ and $r>0$, let
    $$L(\theta,r)=\{a\cdot \mathbf{v}|~a\in[0,\infty),~\mathbf{v}\in B(\theta,r)\},$$
      where $B(x,r)$ is the closed ball whose center is $x$ and radius is $r$. 
  \end{Def}
  Note that if $r>2$, for any $\theta\in \mathbb{S}^{d-1}=\{x\in\R^d|~|x|=1\}$, $L(\theta,r)$ is the entire $\R^d$.
    We restrict ourselves to the following class of distributions. A distribution $F$ is said to be {\em useful} if  
   \begin{equation}\label{Def:useful}
     \P(\tau_e=F^-)<
    \begin{cases}
    p_c(d) & \text{if $F^-=0$} \\
    \vec{p}_c(d)& \text{otherwise},
    \end{cases}
    \end{equation}
   where $p_c(d)$ and $\vec{p}_c(d)$ stand for the critical probabilities for $d$-dimensional percolation and oriented percolation model, respectively and $F^-$ is the infimum of the support of $F$. Note that if $F$ is continuous, i.e., $\P(\tau_e=a)=0$ for any $a\in \R$, then $F$ is useful.
   \subsection{Main results}
   \begin{thm}\label{thm:shape}
   Suppose that $F$ is useful and there exists $\alpha>0$ such that $\E e^{\alpha \tau_e}<\infty$. For any $\theta\in\mathbb{S}^{d-1}=\{x\in\R^d|~|x|=1\}$ and $r>0$, there exist $c,C>0$ such that for any $t>0$ and closed convex set $\Gamma\subset\R^d$ containing $0$,
$$\P({\rm F}_{L(\theta,r)}(B(t),\Gamma)\le c \log{t})\le C\exp{(-t^c)}.$$
   \end{thm}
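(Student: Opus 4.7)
The plan is to reduce the shape-fluctuation bound to a one-dimensional anti-concentration estimate for the passage time along a fixed ray inside the cone, and then to prove that estimate via an iterated resampling argument. For the reduction, fix $\theta_0$ in the interior of $L(\theta,r)\cap \mathbb{S}^{d-1}$. For a closed convex $\Gamma\ni 0$, set $R_\Gamma:=\sup\{s\ge 0:s\theta_0\in\Gamma\}$ and $R(t):=\sup\{s\ge 0:s\theta_0\in B(t)\}$. Unwinding the definitions of $\Gamma^\pm_{c\log t}$, the event $F_{L(\theta,r)}(B(t),\Gamma)\le c\log t$ forces $|R(t)-R_\Gamma|\le c'\log t$, and since $R(t)$ is the largest $s$ with $T(0,s\theta_0)\le t$, this gives
$$T(0,(R_\Gamma-c'\log t)\theta_0)\le t<T(0,(R_\Gamma+c'\log t)\theta_0).$$
Writing $x_t:=R_\Gamma\theta_0$ and using the exponential moment together with a standard path-chaining argument to bound the local passage time across a segment of length $O(\log t)$ by $O(\log t)$ with overwhelming probability, this confines $T(0,x_t)$ to a window of width $O(\log t)$. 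Hence it suffices to prove
$$\sup_{a\in\R}\P\bigl(T(0,x_t)\in[a,a+c''\log t]\bigr)\le C\exp(-t^c),$$
uniformly over $x_t$ of norm $\Theta(t)$ inside the cone. A short case analysis based on the shape theorem \eqref{shape-thm1} handles the degenerate regimes where $R_\Gamma$ is either much smaller or much larger than $t$.

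For the anti-concentration, start with one scale of resampling. Choose $M$ disjoint boxes $B_1,\dots,B_M$ of side $O(1)$, well-separated along a thin tube about the segment $[0,x_t]$. Using $F(0)<p_c(d)$ and standard FPP estimates, with overwhelming probability the optimal path from $0$ to $x_t$ meets a positive fraction of these boxes. For each $B_i$, let $\tau^{(i)}$ denote the configuration obtained by resampling all edge-weights inside $B_i$ independently from $F$. Because $F$ is useful, a positive fraction of such resamplings change $T(0,x_t)$ by an amount whose conditional distribution, given the environment outside $B_i$, has a non-degenerate component on an interval of length bounded below; this is the van den Berg--Kesten-style input from \cite{BK93}. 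Conditioning on the environment outside all boxes then yields a small-ball estimate of polynomial type, roughly $O(\log t/\sqrt{M})$.

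Since one scale gives only polynomial anti-concentration, to reach $\exp(-t^c)$ one applies the resampling argument inductively on nested scales. At each successive level the collection of boxes is refined, and the anti-concentration established at the previous level is used to control the dominant source of cross-scale dependence, namely the rerouting of the optimal path induced by a resampling. Taking a number of levels growing slowly with $t$ compounds the small-ball estimates into the desired stretched-exponential bound.

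The main obstacle is this inductive bootstrap. Each resampling round couples the optimal path before and after the perturbation, and chaining many rounds would naively destroy the independence underlying the small-ball estimate. The delicate point is to absorb the cross-level dependence into the previously established (weaker) anti-concentration bound, using the \cite{BK93}-type disjoint-occurrence argument together with the useful-distribution hypothesis to keep reroutes short. It is this interplay that replaces Zhang's Russo-formula analysis for Bernoulli weights and makes the proof go through for arbitrary useful distributions with exponential moments.
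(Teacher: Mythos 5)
Your reduction to a one-dimensional anti-concentration estimate is where the argument breaks: the claim
$$\sup_{a\in\R}\P\bigl(T(0,x_t)\in[a,a+c''\log t]\bigr)\le C\exp(-t^c)$$
is actually \emph{false}, by a simple pigeonhole argument. Under the shape theorem, $T(0,x_t)\le Ct$ with probability at least $1/2$ for $|x_t|=\Theta(t)$, so partitioning $[0,Ct]$ into at most $Ct/(c''\log t)$ windows of length $c''\log t$ forces $\sup_a\P(T(0,x_t)\in[a,a+c''\log t])\ge c''\log t/(2Ct)$, which is polynomial in $t$. No choice of constants can make it stretched-exponential. Point-to-point anti-concentration in FPP is only known (and can only be true) at polynomial rate; the best current bounds are of the type in Chatterjee's work.

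The deeper issue is that your reduction discards precisely the structure the paper exploits. The event $\{F_{L(\theta,r)}(B(t),\Gamma)\le c\log t\}$ pins $B(t)$ to $\Gamma$ simultaneously in \emph{many} well-separated directions inside the cone, and the stretched-exponential rate comes from the product of roughly $t^{c_2/2}$ nearly-independent directional constraints. Concretely, the paper chooses $t^\e$ directions $\alpha_1,\dots,\alpha_{t^\e}$ whose boundary crossings are separated by $\ge t^\e$, defines a direction as ``good'' when the fluctuation is large in a small window around $y_{\alpha_i}\in\partial\Gamma^-_{c^2\log t}$, and proves (Lemma \ref{crucial-shape}) that having $k$ good directions implies $k+1$ good directions at the cost of a factor $t^{-\e/8}$: one resamples a short path near a \emph{bad} black direction to very low weights, which makes that direction good while the spatial separation and the event $W_1$ prevent this local surgery from changing the status of any other direction. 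Iterating this roughly $t^{c_2/2}$ times yields $t^{-\e t^{c_2/2}/8}$, which is the stretched-exponential bound. Your proposal restricts to a single ray $\theta_0$, loses the multi-directional decoupling entirely, and then tries to recover the rate by iterating resampling on nested scales along a single tube; but for a single passage time there is no independent product of constraints to iterate over, the successive resamplings do not decouple, and the pigeonhole bound above shows the target inequality cannot hold. The resampling ingredient from \cite{BK93} is the right tool, but it must be applied to many spatially separated local fluctuation events, not to the passage time to one point.
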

   
  We can weaken the exponential moment condition as follows:
    \begin{thm}\label{thm:shape2}
   Suppose that $F$ is useful and $\E[\tau_e^{2m}]<\infty$ with $m\in\N$. Then, for any $\theta\in\mathbb{S}^{d-1}$ and $r>0$, there exist $c,C>0$ such that for any $t>0$ and closed convex set $\Gamma\subset\R^d$ containing $0$,
$$\P({\rm F}_{L(\theta,r)}(B(t),\Gamma)\le c \log{t})\le Ct^{-2dm}.$$
    \end{thm}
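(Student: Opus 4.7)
The plan is to derive Theorem \ref{thm:shape2} from Theorem \ref{thm:shape} by a truncation argument: replace the polynomial moment hypothesis by a uniformly bounded environment and pay for the replacement by Markov's inequality.

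Fix $\beta > 0$ (to be chosen sufficiently large) and set $M = M(t) = t^{\beta}$. Let $\hat\tau_e = \tau_e \wedge M$ with associated first passage time $\hat T$ and ball $\hat B(t) = \{x \in \R^d : \hat T(0,x) \le t\}$. Since $\hat\tau_e \in [F^-, M]$, the truncated distribution is bounded and hence has all exponential moments; moreover, it agrees with $F$ at $F^-$, so it remains useful. Theorem \ref{thm:shape} applied to $\hat\tau$ yields constants $\hat c, \hat C > 0$ (a priori depending on $M$) with
$$\P(F_{L(\theta,r)}(\hat B(t),\Gamma) \le \hat c \log t) \le \hat C \exp(-t^{\hat c}).$$
To couple the two models, introduce the event
$$\Omega_t = \bigl\{\tau_e \le M \text{ for every edge } e \text{ with both endpoints in } [-Kt, Kt]^d\bigr\},$$
with $K$ large depending on $d$ and $F$. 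By Markov and a union bound, $\P(\Omega_t^c) \le C_0 t^d M^{-2m}\,\E[\tau_e^{2m}] = O(t^{d-2m\beta})$. On $\Omega_t$ the weights $\tau$ and $\hat\tau$ agree inside the box; a standard confinement step (using $\hat\tau_e \ge F^-$ when $F^- > 0$, and the shape theorem for $\hat F$ when $F^- = 0$) ensures that for $K$ large all $\hat\tau$-geodesics from $0$ to points $x$ with $\hat T(0,x) \le t$ stay in $[-Kt,Kt]^d$, which together with the pointwise inequality $\hat T \le T$ gives $T(0,x) = \hat T(0,x)$ for every $x \in B(t) \cup \hat B(t)$. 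Consequently $B(t) \cap L(\theta,r) = \hat B(t) \cap L(\theta,r)$ on $\Omega_t$, and the two fluctuation values coincide.

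Choosing $\beta > d + d/(2m)$ yields $\P(\Omega_t^c) = O(t^{-2dm})$, and combining with the estimate for $\hat B(t)$ gives Theorem \ref{thm:shape2}, provided $\hat C \exp(-t^{\hat c}) = o(t^{-2dm})$. The principal obstacle is precisely this proviso: controlling how $\hat c$ and $\hat C$ depend on the truncation level $M = t^{\beta}$. One must inspect the proof of Theorem \ref{thm:shape} and verify that $\hat c$ stays bounded below (and $\hat C$ grows at most polynomially) as $M$ grows like $t^{\beta}$, which is plausible since the resampling scheme of van den Berg--Kesten only feels the exponential moment through a few elementary estimates. Should this degrade too fast, the alternative is to rerun the resampling induction behind Theorem \ref{thm:shape} using Chebyshev's inequality on the $L^{2m}$ moment at each step in place of the exponential Chebyshev bound; this converts the factor $\exp(-t^{c})$ into $t^{-2dm}$ directly and bypasses the need to track constants through the truncation.
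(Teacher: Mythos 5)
Your truncation strategy correctly identifies the obstacle but does not overcome it, and the gap is real. In the proof of Theorem~\ref{thm:shape}, the exponential moment is used in exactly one place: the upper large-deviation estimate $\P(T(0,x)>t)\le\exp(-2Dt)$ for $x\in\frac t2\B_d$, which handles the branch where $\Gamma$ fails to contain $\frac t3\B_d\cap L(\theta,r/2)$. The resampling induction (Lemmas~\ref{black-prob}--\ref{crucial-shape}) only needs $\E\tau_e^2<\infty$ — that is exactly why Lemma~\ref{black-prob} is stated under the weaker hypothesis — and it already yields a stretched-exponential bound without any truncation. So the only thing truncation could help with is the small-$\Gamma$ branch, and that is precisely where it fails: the constant $D$ comes from Theorem~3.13 of~\cite{50} and is controlled by $\alpha$ and $\E e^{\alpha\tau_e}$; for the truncated weights $\hat\tau_e\le M=t^\beta$ the best generic concentration (e.g.\ Azuma/Hoeffding over paths of length $\sim t$) gives $\hat D\sim M^{-2}$, hence $\exp(-\hat D t)\sim\exp(-t^{1-2\beta})$, which does not decay once $\beta>1/2$. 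But your union bound forces $\beta\ge d+d/(2m)>1$. Your own caveat (``one must inspect the proof and verify that $\hat c$ stays bounded below'') is therefore not a footnote but the crux, and the verification fails. Your fallback suggestion — ``rerun the resampling induction using Chebyshev on the $L^{2m}$ moment'' — also misdiagnoses the issue: the stretched-exponential bound in Theorem~\ref{thm:shape} does not come from an exponential Chebyshev step inside the resampling induction (it comes from iterating Lemma~\ref{crucial-shape} roughly $t^{c_2/2}$ times and from binomial concentration in Lemma~\ref{exp-decay}), so there is no exponential Chebyshev step to replace there.

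The paper's actual proof keeps the resampling argument unchanged for the large-$\Gamma$ case and replaces the large-deviation step in the small-$\Gamma$ case by a different, elementary device: if $\frac{t}{2K}\B_d\cap L(\theta,r/2)\not\subset\Gamma$, pick $x\in\Z^d$ in $\frac tK\B_d\cap L(\theta,r)\setminus\Gamma^+_{\log t}$, take the $2d$ edge-disjoint paths $r_1,\dots,r_{2d}$ from $0$ to $x$ of length $\le|x|_1+8$ (Kesten's construction), and bound $\P(T(0,x)>t)\le\prod_i\P(T(r_i)>t)$. Each factor is estimated by Chebyshev with the $2m$-th moment and the Marcinkiewicz--Zygmund bound $\E[(T(r_i)-\E T(r_i))^{2m}]\lesssim(|x|_1+8)^m\E[\tau_e^{2m}]$, giving $\P(T(r_i)>t)\lesssim t^{-m}$, and the $2d$-fold independence yields exactly $t^{-2dm}$. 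This disjoint-paths-plus-Chebyshev idea is the new ingredient and is absent from your proposal; without it (or some substitute) the small-$\Gamma$ case is not handled, and the truncation route as written does not close.
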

  
  \begin{remark}
    Since $\B_d$ is convex and contains $0$, the main result holds for $\Gamma=t\B_d$.
    \begin{remark}
      In fact, the above theorem holds even for a shrinking cone. More precisely, one can see from the proofs below that the following holds: under the condition of Theorem \ref{thm:shape2}, there exists $c>0$ such that for any increasing function $r:(0,\infty)\to (0,\infty)$ with $r(t)\uparrow \infty$ as $t\to\infty$ and $r(t)\le t$,
      \begin{equation}
        \lim_{t\to\infty}\max_{\Gamma}\max_{x\in\partial \Gamma}\P( {\rm F}_{L(x,r(t))}(B(t),\Gamma)\le c\log{r(t)})=0,
        \end{equation}
      where $\Gamma$ runs over all closed convex sets containing $0$. This implies that the fluctuation divereges in any fixed direction.
      \end{remark}
  \end{remark}
  
\subsection{Notation and terminology}
This subsection collects useful notations and terminologies for the proof.
\begin{itemize}
 \item Given two vertices $v,w\in\Z^d$ and a set $D\subset\Z^d$, we set the {\em restricted} first passage time as
$$T_D(v,w)=\inf_{\gamma\subset D}T(\gamma),$$
   where the infimum is taken over all paths $\gamma$ from $v$ to $w$ and $\gamma\subset D$. If such a path does not exist, we set it to be the infinity instead.
   \item Let us define the length of $\gamma=(x_i)^l_{i=1}$ as $\sharp\gamma=l$.
\item It is useful to extend the definition of Euclidean distance $d(\cdot,\cdot)$ as
  $$d(A,B)=\inf\{d(x,y)|~x\in A,~y\in B\}\text{\hspace{4mm}for $A,B\subset \R^d$}.$$
  When $A=\{x\}$, we write $d(x,B)$.
  
\item Given a set $D\subset\Z^d$, let us define the inner boundary of $D$ as $$\partial^- D=\{v\in D|~\exists w\notin D\text{ such that }|v-w|_1=1\}.$$
\item In the proof, we often modify the configuration $\tau$ on a given path $\gamma$. We denote the modified configuration by $\tau^{(\gamma)}=\{\tau^{(\gamma)}_e\}_{e\in E^d}$ and the corresponding first passage time by $T^{(\gamma)}(v,w)$.
\item Let $F^-$ and $F^+$ be the infimum and supremum of the support of $F$, respectively:
  $$F^-=\inf\{\delta\ge 0|~\P(\tau_e<\delta)>0\},~F^+=\sup\{\delta\ge 0|~\P(\tau_e>\delta)>0\}.$$
\end{itemize}
\subsection{Heuristics behind the proof}

\quad Let us briefly explain the basic idea of the proof. One might notice a similarity with the multi-valued map principle, which is, for example, used in \cite{CKNPS}. But, in order to deal with continuous distributions, we use a resampling argument instead in the proof. For simplicity, we suppose that $F(0)>0$ and only discuss how to show that the probability in Theorem~\ref{thm:shape} goes to zero when $L(\theta,r)=\R^d$.\\


First, we take $m$ $(:=[t^{1/2}])$ disjoint paths $(\gamma_i)^{m}_{i=1}$ from $\Gamma^-_{c\log{t}}$ to $(\Gamma^+_{c\log{t}})^c$ whose lengths $l_i\in\N$ are at most $2cd\log{t}$ (see Figure~\ref{fig:shape-fluc}). We write $\gamma_i=(\gamma_i[j])^{l_i}_{j=1}$.  Let us denote by $A_i$ the event that $T(0,\gamma_j[l_j])> t$ for any $j\neq i$ and $T(0,\gamma_i[l_i])\leq t$. Note that the $A_i$'s are disjoint events by construction.\\

 We fix a path $\gamma_i$ defined above arbitrarily. We start with the event $\{{\rm F}_{\R^d}(B(t),\Gamma)\le c\log t\}$. On this event, we resample all configurations along $\gamma_i$ and we consider the event that to each edge $e\in\gamma_i$, $\tau_e=0$ after resampling. If $\gamma_j$'s are far enough from each other, it is natural to expect that this resampling does not change the passage times $(T(0,\gamma_j[l_j]))_{j\neq i}$, though the actual proof needs more technical work. Thus $A_i$ holds after resampling. Hence, we should have
$$\P(A_i)\geq \P(\tau_e=0)^{\sharp \gamma_i}\P({\rm F}_{\R^d}(B(t),\Gamma)\le c\log t).$$

\noindent  By using the facts $A_i$'s are disjoint and $\sharp \gamma_i\leq 2cd\log{t}$, this yields
$$1\geq \sum_{i}\P(A_i)\geq m\P(\tau_e=0)^{2cd\log{t}}\P({\rm F}_{\R^d}(B(t),\Gamma)\le c\log{t}).$$
 Recall that $m=[t^{1/2}]$. Then, using $m\P(\tau_e=0)^{2cd\log{t}}\to\infty$ for sufficiently small $c>0$, we conclude that
 $$\text{$\P({\rm F}_{\R^d}(B(t),\Gamma)\le c\log{t})\to 0$ as $t\to \infty$.}$$
   In order to get the stretched exponential bound in Theorem~\ref{thm:shape}, we apply this argument inductively.


    \section{Proof of Theorem \ref{thm:shape}}
We begin with a basic property of convex sets.
  \begin{lem}\label{convex:full}
    Given a convex set $B\subset\R^d$, for any $\delta>0$, $B=(B^+_{\delta})^-_{\delta}.$
  \end{lem}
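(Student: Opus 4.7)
The plan is to establish both inclusions in $B=(B^+_\delta)^-_\delta$, treating $B$ as a closed convex set (as is the case in the theorem applications, where $\Gamma$ is assumed to be closed convex). The inclusion $B\subseteq(B^+_\delta)^-_\delta$ is immediate from the triangle inequality without using convexity: for any $v\in B$ and any $w\in(B^+_\delta)^c$, by definition $d(w,B)>\delta$, so $|v-w|\ge d(w,B)>\delta$; taking the infimum over such $w$ gives $d(v,(B^+_\delta)^c)\ge\delta$, and since $v\in B\subseteq B^+_\delta$ this yields $v\in(B^+_\delta)^-_\delta$.

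For the reverse inclusion $(B^+_\delta)^-_\delta\subseteq B$ I argue by contrapositive: suppose $v\notin B$, and produce $w\in(B^+_\delta)^c$ with $|v-w|<\delta$, which forces $v\notin(B^+_\delta)^-_\delta$. Let $p$ be the metric projection of $v$ onto the closed convex set $B$, and set $u:=v-p\neq 0$. Along the outward ray $r(s):=v+su/|u|$, the key fact is that $p$ remains the projection of $r(s)$ onto $B$ for every $s\ge 0$, so $d(r(s),B)=|u|+s$. Choosing any $s$ in the nonempty open interval $(\max\{0,\delta-|u|\},\delta)$, the point $w:=r(s)$ then satisfies $d(w,B)=|u|+s>\delta$ together with $|v-w|=s<\delta$, as required.

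The only nontrivial input is the projection-invariance along the outward ray, which follows from the standard variational characterization of the metric projection: $p=\mathrm{proj}_B(v)$ if and only if $\langle v-p,\,q-p\rangle\le 0$ for every $q\in B$. Since $r(s)-p=(1+s/|u|)(v-p)$ is a non-negative multiple of $v-p$, the same inequality $\langle r(s)-p,\,q-p\rangle\le 0$ holds for all $q\in B$, and expanding $|r(s)-q|^2=|r(s)-p|^2-2\langle r(s)-p,q-p\rangle+|p-q|^2$ gives $|r(s)-q|^2\ge|r(s)-p|^2$. Hence $p$ is indeed the nearest point of $B$ to $r(s)$, and $d(r(s),B)=|r(s)-p|=|u|+s$ as claimed. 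No delicate estimates arise beyond this; convexity enters only through the stability of the metric projection on a closed convex set, so the main (mild) obstacle is simply recalling and applying this variational characterization.
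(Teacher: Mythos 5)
Your proof is correct, and for the easy inclusion $B\subseteq(B^+_\delta)^-_\delta$ it matches the paper's argument. The more interesting point is the reverse inclusion: the paper simply asserts, for $x\notin B$, that ``there exists $y\in(B^+_\delta)^c$ with $d(x,y)<\delta$,'' leaving unstated why convexity guarantees such a $y$. Your argument supplies exactly the missing justification: project $x$ to the nearest point $p$ of the closed convex set $B$, use the variational inequality $\langle x-p,q-p\rangle\le 0$ to show the projection is stable along the outward ray $r(s)=x+s(x-p)/|x-p|$, hence $d(r(s),B)=|x-p|+s$, and then pick $s\in(\max\{0,\delta-|x-p|\},\delta)$. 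This is the natural way to make the paper's one-line claim rigorous, so the two proofs are the same in spirit, yours being the fully detailed version. You are also right to flag that the argument (and indeed the lemma as used) requires $B$ to be closed as well as convex — otherwise the metric projection need not exist and the identity can fail, e.g.\ $B=(0,1)\subset\R$ gives $(B^+_\delta)^-_\delta=[0,1]\neq B$ — a hypothesis the paper omits from the lemma statement even though it is satisfied in all its applications.
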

 \begin{proof}
    If $x\in B$, there exists $\delta>0$ such that $B(x,\delta)\subset
B^+_{\delta}$. Thus $d(x,(B^+_{\delta})^c)\ge \delta$, which implies
$x\in (B^+_{\delta})^-_{\delta}.$ It follows that $ B\subset
(B^+_{\delta})^-_{\delta}$. On the other hand, if $x\notin B$, then
there exists $y\in (B^+_{\delta})^c$ such that $d(x,y)<\delta$, which
implies $x\notin(B^+_{\delta})^-_{\delta}.$ It follows that $ B\supset
(B^+_{\delta})^-_{\delta}$.
    \end{proof}
  Set $\theta\in\mathbb{S}^{d-1}$ and $r>0$.
  \begin{lem}
    There exists $D>0$ such that for sufficiently large $t>0$,
    $$\P\left(\frac{t}{2}\B_d\subset B(t)\right)\geq 1-\exp{(-D t)}.$$
  \end{lem}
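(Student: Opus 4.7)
The plan is to reduce the statement to a union bound over the lattice points in $(t/2)\B_d$ and then to control, for each such point $\tilde x$, the probability $\P(T(0,\tilde x)>t)$ by a quantity that decays exponentially in $t$. Since $g$ is a norm on $\R^d$, the norms $g$ and $|\cdot|$ are equivalent, so there exists $K>0$ with $g(y)\ge |y|/K$ for all $y\in\R^d$; in particular $(t/2)\B_d$ lies in the Euclidean ball of radius $Kt/2$ and $(t/2)\B_d\cap\Z^d$ has cardinality $O(t^d)$. Because $T(0,x)=T(0,[x])$, the event $\{(t/2)\B_d\not\subset B(t)\}$ is contained in $\bigcup_{\tilde x}\{T(0,\tilde x)>t\}$, the union running over the $O(t^d)$ lattice points $\tilde x$ with $g(\tilde x)\le t/2+O(1)$.

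To bound each term, I would fix a small $\delta>0$ and split into two regimes. In the small regime $|\tilde x|\le \delta t$, take any deterministic lattice path $\pi$ from $0$ to $\tilde x$ of length $|\tilde x|_1\le d\delta t$; the exponential Chernoff bound together with the hypothesis $\E e^{\alpha\tau_e}<\infty$ gives $\P(T(0,\tilde x)>t)\le e^{-\alpha t}(\E e^{\alpha\tau_e})^{d\delta t}$, which is at most $e^{-\alpha t/2}$ once $\delta$ is chosen small enough relative to $\alpha$ and $\E e^{\alpha\tau_e}$. In the large regime $|\tilde x|>\delta t$, I would invoke Kesten's linear large-deviation estimate from \cite{Kes93}: for useful distributions with an exponential moment there exist $C_1,c_1>0$ such that $\P(T(0,\tilde x)>2g(\tilde x))\le C_1 e^{-c_1|\tilde x|}$ uniformly in $\tilde x$ (uniformity in direction follows from continuity of $g$ on the unit sphere). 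Since $g(\tilde x)\le t/2$ forces $t\ge 2g(\tilde x)$, this yields $\P(T(0,\tilde x)>t)\le C_1 e^{-c_1\delta t}$.

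A union bound over the $O(t^d)$ lattice points then absorbs the polynomial prefactor into either exponential and gives $\P((t/2)\B_d\not\subset B(t))\le e^{-Dt}$ for some $D>0$ and all sufficiently large $t$. The only nontrivial input is the linear large-deviation bound on $T(0,\tilde x)$ when $|\tilde x|$ is of order $t$, which is exactly Kesten's 1993 theorem and is already available under the lemma's hypotheses; the rest is elementary norm comparison and Chernoff estimation, and I anticipate no serious obstacle beyond quoting the right form of the LDP.
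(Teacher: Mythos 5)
Your proof is correct and follows essentially the same route as the paper: a union bound over the $O(t^d)$ lattice points of $\tfrac{t}{2}\B_d$ combined with an exponential tail bound on $\P(T(0,x)>t)$. The paper obtains the latter in a single stroke by citing Theorem~3.13 of \cite{50}, whereas you unpack it into a Chernoff bound for short $|x|$ and Kesten's linear large-deviation estimate for $|x|$ of order $t$; the structure and conclusion are the same.
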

  \begin{proof}
     From Theorem 3.13 of \cite{ADH}, there exists $D>0$ such that for any $t>1$ and $x\in \frac{t}{2}\B_d\cap \Z^d$,
    \begin{equation}\label{large-dev}
      \begin{split}
        \P(T(0,x)> t)\le \exp{(-2Dt)}.
    \end{split}
    \end{equation}
    Since $\frac{t}{2}\B_d\not\subset B(t)$ implies that there exists $x\in \frac{t}{2}\B_d$ such that $T(0,x)> t$. Therefore, we obtain
    \begin{equation}\label{large-dev}
      \begin{split}
       \P\left(\frac{t}{2}\B_d\not\subset B(t)\right) \leq \sum_{x\in \frac{t}{2}\B_d\cap \Z^d}\P(T(0,x)> t)\le Ct^d \exp{(-2Dt)}\leq C \exp{(-Dt)}.
    \end{split}
    \end{equation}
  \end{proof}

\begin{figure}[b]\label{fig:intersection}
  \scalebox{0.6}{\includegraphics[width=8.0cm]{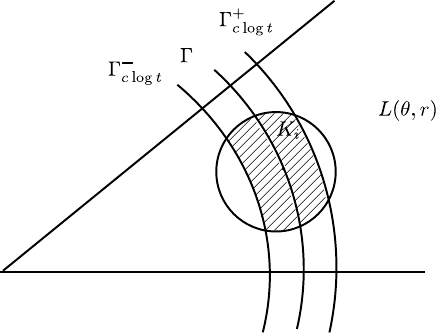}}
  \hspace{12mm}
  \scalebox{0.9}{\includegraphics[width=8.0cm]{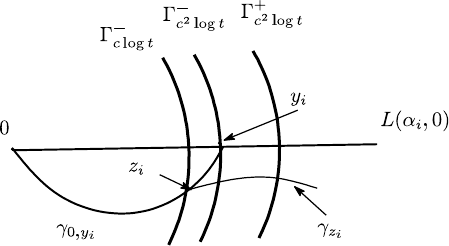}}
\caption{}
Schematic picture of the proof of Theorem \ref{thm:shape}.
  \label{fig:two}
\end{figure}  
 We first consider the case $\frac{t}{3}\B_d \cap L(\theta,r/2)\not \subset \Gamma$. Then, it follows from Lemma \ref{convex:full} that $\frac{t}{2}\B_d \cap L(\theta,r) \not\subset \Gamma^+_{\log{t}}$. If $B(t) \cap L(\theta,r) \subset \Gamma^+_{\log{t}}$, then $\frac{t}{2}\B_d\not\subset B(t)$, which implies
 \begin{equation}\label{simply-case}
   \P({\rm F}_{L(\theta,r)}(B(t),\Gamma)\le \log{t})\le \P\left(\frac{t}{2}\B_d\not \subset B(t)\right)\le \exp{(-Dt)}.
   \end{equation}
 \noindent  Thus without loss of generality, we can restrict ourselves to $\Gamma$'s, which satisfy
  \begin{equation}\label{condition-shape}
  \frac{t}{3}\B_d \cap L(\theta,r/2)\subset \Gamma.
  \end{equation}
  Take a positive constant $\e$ less than $1/2$ arbitrarily. Hereafter, we sometimes omit $[ \cdot ]$ and simply write $t^{\e}$ instead of $[ t^{\e}]$ with some abuse of notation. By \eqref{condition-shape}, for any sufficiently large $t$, there exist $\alpha_1,\cdots,\alpha_{t^{\e}}\in \mathbb{S}^{d-1}\cap  L(\theta,r/4)$ such that for any $i\neq j$,
  \begin{equation}\label{apart}
    d(\partial \Gamma\cap L(\alpha_i,0),\partial \Gamma\cap L(\alpha_j,0))\ge t^{\e}.
  \end{equation}
  Note that $L(\alpha_i,0)\cap \partial \Gamma^-_{c^2\log{t}}$ is a single point since $\Gamma$ is a convex set. For any $i$, we set
  \begin{equation}\label{yi}
    y_i=y_{\alpha_i} =[L(\alpha_i,0)\cap \partial \Gamma^-_{c^2\log{t}}]\in \Z^d.
  \end{equation}

  We use the following property of useful distributions.
\begin{lem}\label{useful}
If $F$ is useful, there exist $\delta>0$ and $D>0$ such that for any $v,w\in\Z^d$,
$$\P(T(v,w)<(F^-+\delta)|v-w|_1)\leq{}e^{-D|v-w|_1}.$$
\end{lem}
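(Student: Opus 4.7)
The hypothesis of usefulness is tailored exactly for this lemma, and the argument splits into two cases depending on whether $F^- > 0$.

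Case $F^- = 0$. Here usefulness reads $\P(\tau_e = 0) < p_c(d)$. Since $\P(\tau_e < \delta)\downarrow \P(\tau_e = 0)$ as $\delta\downarrow 0$, I can fix $\delta_0 > 0$ with $p := \P(\tau_e < \delta_0) < p_c(d)$. Declare an edge \emph{open} if $\tau_e < \delta_0$; this defines a subcritical Bernoulli bond percolation on $E^d$, for which the radius of the open cluster through any vertex has an exponentially decaying tail by Menshikov / Aizenman--Barsky. Set $\delta = \delta_0/4$ and assume $T(v,w) < \delta|v-w|_1$, realized by an optimal path $\gamma$. At most $|v-w|_1/4$ edges of $\gamma$ can satisfy $\tau_e \geq \delta_0$, so $\gamma$ is a concatenation of at most $|v-w|_1/4 + 1$ open subpaths joined by single closed edges. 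The union of the open clusters hit by $\gamma$, patched by the closed separators, must link $v$ to $w$, and since only $O(|v-w|_1)$ such clusters are involved, summing the exponential tail on their diameters via a first-moment / union bound delivers the required $e^{-D|v-w|_1}$.

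Case $F^- > 0$. Now usefulness reads $\P(\tau_e = F^-) < \vec{p}_c(d)$, and I choose $\delta_0 > 0$ small enough that $q := \P(\tau_e \leq F^- + \delta_0) < \vec{p}_c(d)$. Taking $\delta \ll \delta_0 F^-$ and an optimal path $\gamma$ realizing $T(v,w) < (F^- + \delta)|v-w|_1$, the bound $F^- \sharp\gamma \leq T(\gamma)$ forces $\sharp\gamma \leq (1 + \delta/F^-)|v-w|_1$, so $\gamma$ is within a $1+o(1)$ factor of a monotone $\ell^1$-geodesic. A further counting argument shows at most $(\delta/\delta_0)|v-w|_1$ of its edges have $\tau_e > F^- + \delta_0$, i.e. almost every edge is \emph{cheap} in the sense $\tau_e \leq F^- + \delta_0$. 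Orienting each axis according to the sign of the corresponding coordinate of $w-v$, the cheap edges form a subcritical oriented percolation with parameter $q$, and a union bound over the at most $(2d)^{(1+\delta/F^-)|v-w|_1}$ candidate paths from $v$ to $w$ of length at most $(1 + \delta/F^-)|v-w|_1$, combined with the subcritical exponential decay of oriented open paths, yields the estimate for $\delta$ small enough.

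The delicate step is the combinatorial reduction in the case $F^- = 0$: converting the assertion "a positive fraction of $\gamma$'s edges are cheap" into a clean connectivity event to which subcritical exponential decay applies. This is essentially the classical argument due to Kesten; a full treatment appears in \cite{50}, and my plan is to invoke it rather than reproduce the bookkeeping here.
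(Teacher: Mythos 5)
The paper gives no proof at all for this lemma: it simply cites Lemma~5.5 of van den Berg--Kesten \cite{BK93} and moves on. So your plan of sketching the Kesten-style percolation argument and then invoking the literature for the bookkeeping is, in spirit, the same as what the paper does, and your Case $F^-=0$ sketch is the correct skeleton of that argument (reduce to ``few closed edges,'' then use subcritical exponential cluster decay; the first-moment/union bound over cluster configurations is precisely the nontrivial content of Kesten's Proposition~5.8, which you rightly defer).

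However, your Case $F^->0$ sketch contains a genuine gap in the final step. You propose a union bound over the $\sim (2d)^{n}$ candidate paths of length $n\approx|v-w|_1$, each contributing a per-path probability of roughly $q^{(1-o(1))n}$ (by a Chernoff bound) of being mostly cheap. That product is $(2dq)^{(1-o(1))n}$, which decays exponentially only when $q<1/(2d)$. Usefulness gives only $q<\vec{p}_c(d)$, and this is strictly weaker: for $d=2$, $\vec{p}_c(2)\approx 0.645$ while $1/(2d)=1/4$, and even replacing $2d$ by the connective constant does not close the gap. Moreover, ``subcritical exponential decay of oriented open paths'' cannot simply be \emph{combined with} a union bound over paths --- that exponential decay is itself already a statement about the union over all oriented paths, and the paths you must rule out are neither purely oriented nor purely cheap (they have $O(\delta)$-fraction backward steps and $O(\delta)$-fraction expensive edges). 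Closing the gap requires a coarse-graining/block-renormalization step: tile $\Z^d$ by boxes, compare to an oriented percolation at the block scale where an ``open'' block means the local configuration admits the kind of near-geodesic cheap crossing your path would need, and observe that a path realizing $T(v,w)<(F^-+\delta)|v-w|_1$ forces linearly many such blocks to be open, which subcriticality forbids. This is the actual content of BK93 Lemma~5.5, and your proposal as written elides exactly that step while presenting the estimate as if it followed directly.
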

For a proof of this lemma, see Lemma 5.5 in \cite{BK93}. We fix $\delta>0$ in Lemma \ref{useful}.
  \begin{Def}
    An $\alpha\in\mathbb{S}^{d-1}$ is said to be black if the following hold:\\
    (1) for any two vertices $v,w\in B(y_{\alpha},2d(\log{t})^2)\cap \Z^d$ with $|v-w|_1\ge \sqrt{\log{t}}$ and a path $\pi:v\to w\subset B(y_{\alpha},2d(\log{t})^2)$, 
    $$T(\pi)\ge (F^-+\delta)|v-w|_1,$$
    (2) for any $e\in E^d$ with $e\subset B(y_{\alpha},2d(\log{t})^2)$,
    $$\tau_e\leq (\log{t})^{2d}.$$
  \end{Def}
  We state the following lemma with a slightly general moment condition to use in the proof of Theorem \ref{thm:shape2}.
  \begin{lem}\label{black-prob}
    If $\E\tau_e^2<\infty$, 
  $$\lim_{t\to\infty}\inf_{\alpha \in\mathbb{S}^{d-1}}\P(\alpha\text{ is black })=1.$$
  \end{lem}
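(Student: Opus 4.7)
The plan is to bound the probability of the complementary event $\{\alpha \text{ is not black}\}$ by a union bound over the two defining conditions, with estimates that are uniform in $\alpha$ because they depend only on the radius $2d(\log t)^2$ of the ball, not on its center $y_\alpha$.

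For condition (2), I would use the assumption $\E\tau_e^2<\infty$ together with Markov's inequality applied to $\tau_e^2$ to get
\[
\P\bigl(\tau_e > (\log t)^{2d}\bigr) \leq \frac{\E[\tau_e^2]}{(\log t)^{4d}}.
\]
Since the number of edges $e$ with $e\subset B(y_\alpha,2d(\log t)^2)$ is at most $C(\log t)^{2d}$, a union bound gives that the probability that (2) fails is $O((\log t)^{-2d})$, which tends to $0$ uniformly in $\alpha$.

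For condition (1), the key observation is that if some path $\pi\subset B(y_\alpha,2d(\log t)^2)$ between $v$ and $w$ satisfies $T(\pi)<(F^-+\delta)|v-w|_1$, then the unrestricted first passage time satisfies $T(v,w)<(F^-+\delta)|v-w|_1$ as well. Lemma~\ref{useful} then bounds this probability by $e^{-D|v-w|_1}\leq e^{-D\sqrt{\log t}}$ for every pair with $|v-w|_1 \geq \sqrt{\log t}$. Since the number of such ordered pairs $(v,w) \in (B(y_\alpha,2d(\log t)^2)\cap\Z^d)^2$ is at most $C(\log t)^{4d}$, a union bound yields that the probability that (1) fails is bounded by $C(\log t)^{4d}\,e^{-D\sqrt{\log t}}$, which again tends to $0$ uniformly in $\alpha$.

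Combining these two estimates gives $\sup_{\alpha}\P(\alpha\text{ is not black})\to 0$, which is the conclusion. There is no real obstacle here: the heavy lifting is done by Lemma~\ref{useful} (which controls geodesic-length lower bounds) and by the second moment hypothesis (which controls the maximum edge weight in a polylogarithmic window). The only point to be slightly careful about is that Lemma~\ref{useful} is stated for $T(v,w)$, not for restricted passage times, but this direction of the inequality is trivial since restricting paths only increases the infimum, so the bound transfers automatically to any subpath contained in the ball.
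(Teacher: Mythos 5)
Your proposal is correct and follows essentially the same route as the paper: a union bound over the polylog-many edges with Markov's inequality applied to $\tau_e^2$ for condition (2), and a union bound over vertex pairs combined with Lemma~\ref{useful} for condition (1), both uniform in $\alpha$ since the ball radius is independent of $y_\alpha$. Your bookkeeping of the polylogarithmic factors is in fact slightly cleaner than the paper's displayed estimate, which appears to have the $(\log t)^{2d}$ and $(\log t)^{4d}$ prefactors transposed, but both versions yield the required decay to zero.
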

  \begin{proof}
   Note that  there exists $C>0$ independent of $t$ and $\alpha$ such that $$\sharp \{e\in E^d|~e\subset B(y_{\alpha},2d(\log{t})^2)\}\le C (\log{t})^{2d}.$$ By Lemmma \ref{useful} and the union bound, we have
    \begin{equation}
       \begin{split}
        & \P(\{\text{$\alpha$ is black}\}^c)\\
         &\leq \underset{|v-w|_1\ge \sqrt{\log{t}}}{\sum_{v,w\in B(y_{\alpha},2d(\log{t})^2)\cap \Z^d}}\P(T(v,w)\leq (F^-+\delta)|v-w|_1)+\sum_{e\subset B(y_{\alpha},2d(\log{t})^2)}\P(\tau_e>(\log{t})^{2d})\\
         & \le C^2 (\log{t})^{4d} \E[\tau_e^2](\log{t})^{-2d}+ C (\log{t})^{2d} e^{-D\sqrt{\log{t}}}.
         \end{split}
    \end{equation}
    The last term goes to $0$ as $t\to\infty$ uniformly in $\alpha$ and thus we have completed the proof.
    \end{proof}

  \begin{Def}
    ~\\
    \noindent (1) Let $W_1$ be the event  that for any $v,w\in [-t^2,t^2]\cap\Z^d$ with $|v-w|_1\ge t^{\e /2}$, $$T(v,w)\ge (F^-+\delta)|v-w|_1.$$
    \noindent (2) Let $W_2$ be the event  that
    $$\sharp \{i\in\{1,\cdots,t^{\e}\}|~\alpha_i\text{ is black }\}\ge \frac{1}{2}t^{\e}.$$
  \noindent (3) Denote the intersection of $W_1$ and $W_2$ as $W=W_1\cap W_2$.
  \end{Def}
  \begin{lem}\label{exp-decay}
    There exist $c_1,c_2>0$ such that for any sufficiently large $t>0$ 
    $$\P(W^c)\le c_1\exp{(-t^{c_2})}.$$
  \end{lem}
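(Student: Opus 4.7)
The plan is to estimate $\P(W_1^c)$ and $\P(W_2^c)$ separately and combine them by a union bound.

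For $\P(W_1^c)$, I would apply Lemma \ref{useful} pairwise. The number of pairs $(v,w) \in ([-t^2, t^2] \cap \Z^d)^2$ is $O(t^{4d})$, and for each pair with $|v-w|_1 \ge t^{\e/2}$ the lemma bounds $\P(T(v,w) < (F^-+\delta)|v-w|_1)$ by $e^{-D|v-w|_1} \le e^{-D t^{\e/2}}$. A union bound then yields
\[
\P(W_1^c) \le C t^{4d} e^{-D t^{\e/2}} \le \exp(-t^{c_3})
\]
for any fixed $c_3 < \e/2$ and all sufficiently large $t$.

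For $\P(W_2^c)$, the key geometric observation is that the event $\{\alpha_i\text{ is black}\}$ depends only on the edge weights inside $B(y_{\alpha_i}, 2d(\log t)^2)$, so if those balls are pairwise disjoint then the indicators $X_i := \mathbf{1}[\alpha_i \text{ is black}]$ are independent. From \eqref{apart}, the boundary points $p_i := \partial\Gamma \cap L(\alpha_i, 0)$ are pairwise at Euclidean distance at least $t^\e$. Using the reduction $\frac{t}{3}\B_d \cap L(\theta, r/2) \subset \Gamma$ together with $\alpha_i \in L(\theta, r/4)$, both $p_i$ and $y_i$ lie on the ray $L(\alpha_i, 0)$ at Euclidean distance $\Theta(t)$ from the origin, and an elementary angular comparison gives $|y_i - y_j| \ge c_0\, t^\e$ for some absolute $c_0 > 0$. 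Since $c_0\, t^\e \gg 4d(\log t)^2$ for large $t$, the balls are disjoint and the $X_i$ are independent. Setting $q_t := 1 - \inf_{\alpha}\P(\alpha \text{ is black})$, Lemma \ref{black-prob} yields $q_t \to 0$, so
\[
W_2^c = \Bigl\{\sum_{i=1}^{t^\e}(1-X_i) > \tfrac{1}{2}t^\e\Bigr\}
\]
is a large-deviation event for a sum of independent Bernoulli($\le q_t$) variables with mean at most $q_t t^\e \ll t^\e/2$; a standard Chernoff bound thus gives $\P(W_2^c) \le \exp(-c_4 t^\e)$ for some $c_4 > 0$ and all $t$ large enough.

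Combining $\P(W^c) \le \P(W_1^c) + \P(W_2^c)$ and choosing any $c_2 < \min(c_3, \e)$ (with $c_1$ absorbing polynomial slack) yields the claim. The only nonroutine step is the geometric verification that the balls $B(y_{\alpha_i}, 2d(\log t)^2)$ are pairwise disjoint: once this is in hand, the Chernoff bound upgrades the weak Lemma \ref{black-prob} conclusion (that a single $\alpha$ is black with high probability) to the exponential concentration needed for $W_2$, while $W_1^c$ is handled by a direct union-bound application of Lemma \ref{useful}.
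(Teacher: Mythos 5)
Your proof takes the same route as the paper: a union bound with Lemma~\ref{useful} over the $O(t^{4d})$ pairs in $[-t^2,t^2]^d\cap\Z^d$ for $W_1$, and for $W_2$ the observation that the balls $B(y_i,2d(\log t)^2)$ are pairwise disjoint, so the black indicators are independent, followed by a Chernoff bound using Lemma~\ref{black-prob}. The paper asserts the disjointness of the balls without proof, so your sketch of the geometric justification only adds detail rather than diverging from the paper's argument.
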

  \begin{proof}
    It is easy to check from Lemma \ref{useful} that  $\P(W_1^c)\leq c_1\exp{(-t^{c_2})}$ with some constants $c_1,c_2>0$. Note that $\mathbf{1}_{\{\text{$\alpha_i$ is black}\}}$ depends only on the configurations on $B(y_i,2d(\log{t})^2)$ and $B(y_i,2d(\log{t})^2)\cap B(y_j,2d(\log{t})^2)=\emptyset$ if $i\neq j$. Therefore $\mathbf{1}_{\{\text{$\alpha_i$ is black}\}}$ and $\mathbf{1}_{\{\text{$\alpha_i\j$ is black}\}}$ are independent if $i\neq j$, which easily yields
    $\P(W_2^c)\leq c_1\exp{(-t^{c_2})}$ by Lemma \ref{black-prob}.
    \end{proof}
  \begin{Def}
    
    We say that $\alpha_i$ is good if ${\rm F}_{L(y_i,(\log{t})^4)}(B(t),\Gamma)> c^2 \log{t}.$ Otherwise, we say that $\alpha_i$ is bad. Given $I\subset \{1,\cdots,t^{\e}\}$, we define an event $A_I$ as
  $$A_I=\{I=\{i\in \{1,\cdots,t^{\e}\}|~\text{$\alpha_i$ is good }\}\}.$$
  \end{Def}
  The reason why we have used $(\log{t})^4$ is just $(\log{t})^2\ll (\log{t})^4\ll t^\e$ and this specific choice is not important.
  \begin{lem}\label{exist-x}
    Let $K_i=(\Gamma^+_{c\log{t}}\backslash \Gamma^-_{ c\log{t}})\cap B(y_i,(\log{t})^2)$. For sufficiently large $t$ depending on $c$, if $\alpha_i$ is bad and black, there exists $x\in \partial^-[K_i\cap\Z^d]$ such that $T_{K_i^c\cup\{x\}}(0,x)\leq t-|x-y_i|_1(F^-+\delta),$ where $K_i^c$ is the complement of $K_i$ in $\R^d$.
  \end{lem}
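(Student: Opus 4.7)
The plan is to take an optimal path $\gamma$ from $0$ to $y_i$, let $x$ be its first vertex lying in $K_i$, and use blackness of $\alpha_i$ to lower bound the passage time of the remaining piece $\gamma|_{x\to y_i}$. Since the prefix $\gamma|_{0\to x}$ lies in $K_i^c\cup\{x\}$, it witnesses $T_{K_i^c\cup\{x\}}(0,x)\le T(\gamma|_{0\to x})=T(0,y_i)-T(\gamma|_{x\to y_i})$, so after showing $T(0,y_i)\le t$ and $T(\gamma|_{x\to y_i})\ge(F^-+\delta)|x-y_i|_1$ we are done.

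First I would extract $T(0,y_i)\le t$ from the bad hypothesis. Badness gives $\Gamma^-_{c^2\log t}\cap L(y_i,(\log t)^4)\subset B(t)\cap L(y_i,(\log t)^4)$, and $y_i$ lies in both the cone (as its vertex) and in $\Gamma^-_{c^2\log t}$ (by definition of $y_i$), so $y_i\in B(t)$. Take $\gamma$ to be a simple optimal path from $0$ to $y_i$; since $y_i\in K_i$ while $0\notin K_i$ for $t$ large, $\gamma$ has to enter $K_i$, and the first entry $x$ has a neighbour in $K_i^c$ so lies in $\partial^-(K_i\cap\Z^d)$. Because $\gamma$ is simple, $T(\gamma|_{0\to x})+T(\gamma|_{x\to y_i})=T(0,y_i)\le t$. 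Moreover, $\gamma|_{x\to y_i}$ is itself optimal from $x$ to $y_i$ (otherwise splicing a shorter subpath into $\gamma$ would contradict its optimality), so $T(\gamma|_{x\to y_i})=T(x,y_i)$.

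It remains to bound $T(x,y_i)$ below. A brief geometric check shows $|x-y_i|_1\ge\sqrt{\log t}$ for $t$ large: the vertex $x\in\partial^- K_i$ must be adjacent either to $(\Gamma^+_{c\log t})^c$, to $\Gamma^-_{c\log t}$, or to $\partial B(y_i,(\log t)^2)$, and the $1$-Lipschitz character of $d(\cdot,\Gamma)$ and $d(\cdot,\Gamma^c)$ combined with $d(y_i,\Gamma^c)\approx c^2\log t$ forces $|x-y_i|_1\gtrsim c(1-c)\log t$ in the first two cases and $|x-y_i|_1\approx(\log t)^2$ in the third. Both $x$ and $y_i$ lie in $B(y_i,(\log t)^2)\subset B(y_i,2d(\log t)^2)$, so blackness of $\alpha_i$ applies. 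The main obstacle is that condition (1) in the definition of ``black'' literally constrains only paths that stay inside $B(y_i,2d(\log t)^2)$, whereas the optimal path from $x$ to $y_i$ might leave this ball; however, the proof of Lemma \ref{black-prob} bounds $\P(T(v,w)\le(F^-+\delta)|v-w|_1)$ directly through Lemma \ref{useful}, so on the black event the stronger unrestricted bound $T(v,w)\ge(F^-+\delta)|v-w|_1$ holds for every pair $v,w\in B(y_i,2d(\log t)^2)\cap\Z^d$ with $|v-w|_1\ge\sqrt{\log t}$. Applying this with $v=x$ and $w=y_i$ gives the required lower bound and completes the argument.
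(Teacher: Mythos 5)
Your structure mirrors the paper's: take an optimal $0\to y_i$ path, split at the first vertex $x$ on $\partial^-[K_i\cap\Z^d]$, bound the prefix via $T_{K_i^c\cup\{x\}}(0,x)\le T(0,y_i)-T(\gamma|_{x\to y_i})\le t-T(\gamma|_{x\to y_i})$, and use blackness to bound the tail from below. You also correctly flag the real subtlety the paper glosses over: condition (1) of "black" only controls paths that stay inside $B(y_i,2d(\log t)^2)$, while $\gamma|_{x\to y_i}$ can wander outside.

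Your proposed fix, however, is wrong. You assert that "on the black event the stronger unrestricted bound $T(v,w)\ge(F^-+\delta)|v-w|_1$ holds," citing that the proof of Lemma~\ref{black-prob} bounds $\P\bigl(T(v,w)\le(F^-+\delta)|v-w|_1\bigr)$. But the union bound in Lemma~\ref{black-prob} only uses the implication "not black $\Rightarrow$ some unrestricted $T(v,w)$ is small" (since a cheap path inside the ball gives a cheap unrestricted $T(v,w)$); it does not yield the converse "black $\Rightarrow$ every unrestricted $T(v,w)$ is large," which is what you need. Indeed the event "black" is deliberately defined via paths confined to the ball so that it depends only on the edges in $B(y_i,2d(\log t)^2)$, and that locality is essential for the independence argument in Lemma~\ref{exp-decay}; replacing it with the unrestricted condition would break that step. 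The correct way to close the gap is geometric and uses the factor $2d$: if $\gamma|_{x\to y_i}$ exits $B(y_i,2d(\log t)^2)$, take its prefix $\gamma|_{x\to u}$ up to the last vertex $u$ before the first exit. This prefix lies inside the ball, and since $|u-y_i|_1\ge|u-y_i|_2>2d(\log t)^2-1$ while $|x-y_i|_1\le\sqrt d(\log t)^2$, the triangle inequality gives $|x-u|_1\ge(2d-1-\sqrt d)(\log t)^2-O(1)\ge|x-y_i|_1$ for $d\ge 2$ and $t$ large; applying blackness (1) to $\gamma|_{x\to u}$ then yields $T(\gamma|_{x\to y_i})\ge T(\gamma|_{x\to u})\ge(F^-+\delta)|x-u|_1\ge(F^-+\delta)|x-y_i|_1$, which is what you want. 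With that substitution your argument is complete; without it, there is a genuine gap.
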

  
  \begin{proof}
    Take an arbitrary optimal path $\gamma_{0,y_i}=\{x_i\}^{l}_{i=1}$ from $0$ to $y_i$. Since $\alpha_i$ is bad, we have ${\rm F}_{L(y_i,(\log{t})^4)}(B(t),\Gamma)\le c^2 \log{t}$, which implies $t(0,y_i)\le t$. Let $x$ be the first intersecting point of $\gamma_{0,y_i}$ and $\partial^-[K_i\cap\Z^d]$, i.e., $$\text{$m=\min \{i\in\{1,\cdots,l\}|~x_i \in \partial^-[K_i\cap\Z^d]\neq \emptyset\}$ and $x=x_m$.}$$ Since $|x-y_i|_1\ge \frac{c}{2}\log{t}$ and $\alpha_i$ is black, we have
    $$T_{K_i^c\cup\{x\}}(0,x)\leq t-|x-y_i|_1(F^-+\delta).$$
    \end{proof}
  
  \begin{lem}\label{crucial-shape}
   Let $A^k=\bigcup_{\sharp I=k} A_I$, where the union runs over all subsets $I\subset \{0,\cdots,n\}$ with $\sharp I=k$. Then, for any $k\in\{0,\cdots,t^{\e/2}\}$,    
    \begin{equation}
      \begin{split}
       t^{\e/8} \P\left(A^k \cap W\right)\leq \P(A^{k+1}).
      \end{split}
    \end{equation}
  \end{lem}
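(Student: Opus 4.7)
The plan is to prove this key lemma by a van den Berg--Kesten--style resampling argument. First, I decompose $A^k = \bigsqcup_{|I|=k} A_I$. On $A_I \cap W$, the event $W_2$ supplies at least $t^\e/2$ black indices while at most $k \le t^{\e/2}$ of them are good, so the random set $\mathcal{B}(\omega) = \{i \notin I : \alpha_i \text{ is bad and black}\}$ has $|\mathcal{B}(\omega)| \ge t^\e/4$ for large $t$. For each $i \in \mathcal{B}$, Lemma \ref{exist-x} yields some $x \in \partial^-[K_i \cap \Z^d]$ with $T_{K_i^c \cup \{x\}}(0,x) \le t - |x-y_i|_1(F^- + \delta)$; since this restricted passage time uses only edges with at least one endpoint in $K_i^c$, it is independent of all weights on $\gamma_{x_i} \subset K_i$. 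I therefore redefine $x_i(\omega)$ as the lexicographically first such $x$, so that $x_i$, $\gamma_{x_i}$, and ${\rm End}(x_i)$ are measurable with respect to the edges off of $\gamma_{x_i}$.

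Next, pick $\eta \in (0, \delta/2)$ and then $c$ so small that $4dc(F^- + \eta) \le \delta - \eta$. Set $p_0 = \P(\tau_e \in [F^-, F^- + \eta]) > 0$ and $\kappa = 2d\log(1/p_0)$, and let $\tau^{(\gamma_{x_i})}$ denote the configuration obtained from $\tau$ by replacing the weights on $\gamma_{x_i}$ with independent copies; let $E_i$ be the event that every new weight on $\gamma_{x_i}$ lies in $[F^-, F^- + \eta]$. By independence, $\P(E_i \mid \tau) \ge p_0^{\sharp\gamma_{x_i}} \ge t^{-\kappa c}$. On $A_I \cap W \cap \{i \in \mathcal{B}\} \cap E_i$, combining $\sharp\gamma_{x_i} \le |y_i - x_i|_1 + 2dc^2\log t$ with $|x_i - y_i|_1 \ge (c/2)\log t$ gives
$$T^{(\gamma_{x_i})}(0, {\rm End}(x_i)) \le T_{K_i^c \cup \{x_i\}}(0, x_i) + \sharp\gamma_{x_i}(F^- + \eta) \le t,$$
so ${\rm End}(x_i) \in B(y_i,(\log t)^4) \cap B^{(\gamma_{x_i})}(t) \setminus \Gamma^+_{c^2 \log t}$, making $\alpha_i$ good in the resampled configuration.

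The key locality claim is that this modification does not alter the status of any $\alpha_j$ with $j \ne i$. Because \eqref{apart} puts each $y_j$ at distance at least $t^\e$ from $y_i$, every path from $\gamma_{x_i}$ to $L(y_j,(\log t)^4)$ traverses at least $t^{\e/2}$ edges outside $\gamma_{x_i}$, and the bound in $W_1$ forces it to cost at least $(F^- + \delta)t^{\e/2}$, which overwhelms the $O((\log t)^{2d+1})$ savings on $\gamma_{x_i}$ (from $\sharp\gamma_{x_i} \le 2dc\log t$ and the blackness bound $\tau_e \le (\log t)^{2d}$). A direct passage-time comparison then yields $B^{(\gamma_{x_i})}(t) \cap L(y_j,(\log t)^4) = B(t) \cap L(y_j,(\log t)^4)$ for every $j \ne i$, so the resampled configuration lies precisely in $A_{I \cup \{i\}}$. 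Since $\tau^{(\gamma_{x_i})}$ has the same distribution as $\tau$ and $\gamma_{x_i}$ is measurable with respect to the edges off of itself,
$$\P(A_{I \cup \{i\}}) = \P(\tau^{(\gamma_{x_i})} \in A_{I\cup\{i\}}) \ge t^{-\kappa c}\,\P(A_I \cap W \cap \{i \in \mathcal{B}\}).$$

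Summing over $i \notin I$ (using $|\mathcal{B}| \ge t^\e/4$ on $A_I \cap W$) and then over $|I| = k$, together with the identity $\sum_{|I|=k}\sum_{i \notin I}\P(A_{I\cup\{i\}}) = (k+1)\P(A^{k+1})$, gives $(k+1)\P(A^{k+1}) \ge \tfrac{1}{4}t^{\e - \kappa c}\P(A^k \cap W)$. With $k+1 \le 2t^{\e/2}$ and $c$ chosen so small that $\kappa c < 3\e/8$, this becomes $\P(A^{k+1}) \ge t^{\e/8}\P(A^k \cap W)$ for $t$ large. The hardest part is the locality claim: verifying that the local resampling on $\gamma_{x_i}$ does not accidentally flip the status of any distant $\alpha_j$ (so the new good-count is exactly $|I|+1$, not larger or smaller). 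The event $W_1$ and the angular separation \eqref{apart} between the reference points are precisely designed to enforce this decoupling, and rigorously handling both the $B(t)$-overshoot and $\Gamma$-overshoot directions of the fluctuation around each distant $y_j$ is the real heart of this variant of the van den Berg--Kesten argument.
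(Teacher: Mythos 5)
Your proof is essentially correct and follows the paper's van den Berg--Kesten resampling strategy, with the same three ingredients: Lemma~\ref{exist-x} to locate an entry point $x$ with $T_{K_i^c\cup\{x\}}(0,x)\le t-|x-y_i|_1(F^-+\delta)$, a resampling to small weights along $\gamma_x$ to make $\alpha_i$ good, and a contradiction argument via $W_1$ and \eqref{apart} to rule out any change in the status of $\alpha_j$ for $j\ne i$, all combined with the identity $\sum_{\sharp I=k}\sum_{i\notin I}\P(A_{I\cup\{i\}})=(k+1)\P(A^{k+1})$.

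The one genuine difference from the paper is the device used to make the resampling legal. You pick $x_i$ as the lexicographically first admissible boundary point and observe that the defining quantities $T_{K_i^c\cup\{x'\}}(0,x')$ depend only on edges having at most one endpoint in $K_i$, hence disjoint from the edges of any $\gamma_{x'}\subset K_i$; this gives the self-referential measurability $\{\gamma_{x_i}=\pi\}\in\sigma(\tau_e:e\notin\pi)$ needed for $\tau^{(\gamma_{x_i})}\stackrel{d}{=}\tau$. The paper instead introduces an auxiliary uniform random variable $z_i$ on $\partial^-[K_i\cap\Z^d]$, independent of $\tau$, and works with the enlarged measure $\tilde P_i\otimes\P$, then lower-bounds $\tilde P_i\otimes\P(\tilde A_{I,i})$ by the fraction $\sharp\{z:\mathcal{G}(z)\}/\sharp\partial^-[K_i\cap\Z^d]$, using Lemma~\ref{exist-x} only to guarantee this fraction is at least $1/\sharp\partial^-[K_i\cap\Z^d]$. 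Both devices work; yours avoids the auxiliary randomization at the cost of a measurability verification, while the paper's avoids the measurability issue entirely by decoupling $z_i$ from $\tau$ a priori. One small caveat in your write-up: your intermediate claim $B^{(\gamma_{x_i})}(t)\cap L(y_j,(\log t)^4)=B(t)\cap L(y_j,(\log t)^4)$ is stronger than what is needed (and what the paper proves, namely that the indicator $\mathbb{I}\{\alpha_j\text{ good}\}$ is unchanged); it is true on $A_I\cap W$ but only via the same contradiction argument the paper uses (any $w$ whose membership flips forces a cheap path into $K_i$, hence $\alpha_i$ good for $\tau$, contradicting $i\notin I$), not by a ``direct passage-time comparison'' alone, so that phrase is misleading. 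With that phrasing corrected, the argument is sound.
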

  We postpone the proof of this lemma and first complete the proof of Theorem \ref{thm:shape}.
  \begin{proof}[Proof of Theorem \ref{thm:shape}]
    Combining the previous lemma with Lemma \ref{exp-decay}, we have that for any $k,l<t^{\e/2}/2$,
    \begin{equation}
      \begin{split}
        \P\left(A^{k+l}\right)&\geq t^{\e/8}(\P(A^{k+l-1})-\P(W^c))\\
        & \geq t^{\e/8}\P(A^{k+l-1})-c_1t^{\e/8}\exp{(-t^{c_2})}\\
        &\geq t^{2\e/8}\P(A^{k+l-2})-c_1\sum^2_{n=1}t^{n\e/8}\exp{(-t^{c_2})}.
    \end{split}
    \end{equation}
    Continuing this procedure, for sufficiently large $t>0$, if $k\le t^{\e/2}/2$ and $l\le t^{c_2/2}$, we have
    \begin{equation}
      \begin{split}
        \P\left(A^{k+l}\right)&\geq t^{l\e/8}\P(A^{k})-c_1\sum^l_{n=1}t^{n\e/8}\exp{(-t^{c_2})}\\
        &\geq t^{l\e/8}\P(A^{k})-c_1 t^{l\e/4}\exp{(-t^{c_2})}\\
        &\geq t^{l\e/8}\P(A^{k})-c_1 \exp{(-t^{c_2}/2)}.
    \end{split}
    \end{equation}
   Applying it with $k=0$ and $l=t^{c_2/2}$ yields
        \begin{equation}
      \begin{split}
        \P\left(A^0\right)&\leq \P(A^{t^{c_2/2}})t^{-\e t^{c_2/2}}+c_1\exp{(-t^{c_2}/2)}\\
        &\leq 2c_1\exp{(-t^{c_2}/2)}.
    \end{split}
    \end{equation}
        Since ${\rm F}_{L(\theta,r)}(B(t),\Gamma)\le c^2 \log{t}$ implies that $A^0$ occurs, it follows that
        $$\P({\rm F}_{L(\theta,r)}(B(t),\Gamma)\le c^2 \log{t})\le 2c_1\exp{(-t^{c_2/2})},$$
        as desired.
        \end{proof}
  \begin{proof}[Proof of Lemma \ref{crucial-shape}]
    
  Given $z\in \partial^-[K_i\cap\Z^d]$, we take ${\rm End}(z)\in K_i\cap (\Gamma^+_{c^2\log{t}})^c \cap \Z^d$  and a path $\gamma_z:z\to {\rm End}(z)$  such that  $\gamma_z\subset K_i$ and $\sharp \gamma_z\le (|y_i-z|_1+2dc^{2}\log{t})\land 2dc \log{t}$ with a deterministic rule breaking ties. Let $z_i$ be a random variable uniformly distributed on $\partial^-[K_i\cap\Z^d]$ which is independent of $\tau=\{\tau_e\}_{e\in E^d}$. Let $(\tilde{P}_i,\tilde{\Omega}_i)$ be its probability space. We simply write $\gamma_i$ for $\gamma_{z_i}$ hereafter.\\
          
          Let $\tau^*=\{\tau^*_e\}_{e\in E^d}$ be an independent copy of $\{\tau_e\}_{e\in E^d}$ and also independent of $z_i$. We enlarge the probability space so that it can measure the events both for $\tau$ and $\tau^*$ and we still denote the joint probability measure by $\P$.  Given a path $\gamma$, we define the resampled configuration $\tau^{(\gamma)}=\{\tau^{(\gamma)}_e\}_{e\in E^d}$ as
  $$\tau^{(\gamma)}_e=\begin{cases}
    \tau_{e}^* & \text{if $e\in\gamma$}\\
  \tau_{e} & \text{otherwise.}
  \end{cases}$$
  Note that the distributions of $\tau$ and $\tau^{(\gamma_{i})}$ are the same under $\tilde{P}_i\otimes\mathbb{P}$ since $\tau,~\tau^*,~z_i$ are independent.\\

  Let $I\subset \{1,\cdots,t^{\e}\}$ be such that $\sharp I=k$ and take $i\notin I$. Given $z\in\partial^-[K_i\cap \Z^d]$, we set
  $$\mathcal{G}(z)=\{T_{K_i^c\cup\{z\}}(0,z)\leq t-|z-y_i|_1(F^-+\delta)\}\text{ and }\mathcal{S}(z)=\{\forall e\in\gamma_{z},~\tau^*_{e}\leq F^-+\delta/2\}.$$ Then we define an event as
  $$\tilde{A}_{I,i}=A_I\cap W\cap\mathcal
  {G}(z_i)\cap \mathcal{S}(z_i)\cap\{\text{$\alpha_i$ is black for $\tau$}\}.$$
  We will show that $\tilde{A}_{I,i}$ implies
  \begin{equation}\label{discrim}
    I\cup \{i\}=I^{(\gamma_i)},
    \end{equation}
  where $I^{(\gamma_i)}=\{i\in \{1,\cdots,t^\e\}|~\text{$\alpha_i$ is good for $\tau^{(\gamma_i)}$}\}\}$.
    Under the conditions $\mathcal{G}(z_i)$ and $\mathcal{S}(z_i)$, by the construction of $\gamma_{z}$, we have
  \begin{equation*}
      \begin{split}
        T^{(\gamma_i)}(0,{\rm End}(z_i))&\le T^{(\gamma_i)}(0,z_i)+T^{(\gamma_i)}(\gamma_i)\\
        &\leq t-|z_i-y_i|(F^-+\delta)+(F^-+\delta)|y_i-z_i|_1=t.
    \end{split}
  \end{equation*}
  Thus, $\alpha_i$ is good for $\tau^{(\gamma_i)}$. On the other hand, if
  $\mathbf{1}_{\{\alpha_j\text{ is good for }\tau\}}\neq \mathbf{1}_{\{\text{$\alpha_j$ is good for }\tau^{(\gamma_i)}\}}$ for some $j\neq i$, then there exist $w\in K_j$ and a path $\Gamma=(x_i)^l_{i=1}:0\to w$ with $\Gamma\cap K_i\neq\emptyset$ such that $T(\Gamma)\le t$ or $T^{(\gamma_i)}(\Gamma)\le t$. Indeed any $w\in (B(t) \cap K_j) \bigtriangleup (B^{(\gamma_i)}(t) \cap K_j)$  has such property, where $A\bigtriangleup B$ is the symmetric difference of $A$ and $B$. Note that $(B(t) \cap K_j) \bigtriangleup (B^{(\gamma_i)}(t) \cap K_j)$ is nonempty exactly because of the condition $\mathbf{1}_{\{\alpha_j\text{ is good for }\tau\}}\neq \mathbf{1}_{\{\text{$\alpha_j$ is good for }\tau^{(\gamma_i)}\}}$. Let $m=\min \{i\in\{1,\cdots,l\}|~x_i \in\partial^-[K_i\cap\Z^d]\neq \emptyset\}$ and $x=x_m$. Then under the condition $W$, by \eqref{apart}, we have $$T_{K_i^c\cup\{x\}}(0,x)\leq t-t^{\e/2}.$$
  Since $\alpha_i$ is black (in particular for any $e\in E^d$ with $e\subset B(y_i,2d(\log{t})^2)$,  $\tau_e\leq (\log{t})^{2d}$) and there exists a path $\gamma\subset K_i$ from $x$ to some $\tilde{x}\in (\Gamma^+_{c\log{t}})^c$ whose length is at most $2cd\log{t}$, we obtain $T(0,\tilde{x})\le t-t^\e+2cd(\log{t})^{2d+1}\leq t$. Therefore $\alpha_i$ is good for $\tau$, which contradicts that $i\notin I$. Therefore we have $\mathbf{1}_{\{\alpha_j\text{ is good for }\tau\}}= \mathbf{1}_{\{\text{$\alpha_j$ is good for }\tau^{(\gamma_i)}\}}$ and \eqref{discrim} follows.\\

  From this observation, we have
       \begin{equation}\label{2.7}
      \begin{split}
        \P(A^{k+1})&=\sum_{\sharp I=k+1}\P(A_I)\\
        &=\frac{1}{k+1}\sum_{\sharp I=k}\sum_{i\notin I}\P(A_{I\cup \{i\}})\\
        &=\frac{1}{k+1}\sum_{\sharp I=k}\sum_{i\notin I}\P(A_{I\cup \{i\}}^{(\gamma_i)})\\
        &\ge \frac{1}{k+1}\sum_{\sharp I=k}\sum_{i\notin I}\tilde{P}_i\otimes \P(\tilde{A}_{I,i}).
          \end{split}
    \end{equation}
       Since $\tau$, $\tau^*$ and $z_i$ are independent, $\tilde{P}_i\otimes \P(\tilde{A}_{I,i})$ can be bounded from below as

       \begin{equation}\label{sports}
         \begin{split}
           \tilde{P}_i\otimes \P(\tilde{A}_{I,i})  &=\frac{1}{\sharp \partial^-[K_i\cap \Z^d]}\sum_{z\in \partial^-[K_i\cap \Z^d]}\P(A_I\cap W\cap\mathcal{G}(z)\cap \mathcal{S}(z)\cap\{\text{$\alpha_i$ is black for $\tau$}\})\\
           &= \frac{1}{\sharp \partial^-[K_i\cap \Z^d]}\sum_{z\in \partial^-[K_i\cap \Z^d]}\P(A_I\cap W\cap\mathcal{G}(z)\cap\{\text{$\alpha_i$ is black for $\tau$}\})\P(\mathcal{S}(z))\\
         &\ge \frac{\min_{z\in \partial^-[K_i\cap \Z^d]}\P(\mathcal{S}(z))}{\sharp \partial^-[K_i\cap \Z^d]}  \E[\sharp\{z\in \partial^-[K_i\cap \Z^d]|~\mathcal{G}(z)\}; A_I\cap W\cap\{\text{$\alpha_i$ is black for $\tau$}\}].\\
         \end{split}
       \end{equation}
       Lemma~\ref{exist-x} implies $\sharp\{z\in \partial^-[K_i\cap \Z^d]|~\mathcal{G}(z)\}\geq 1$ on the event $A_I\cap W\cap \{\text{$\alpha_i$ is black for $\tau$}\}$. Combining it with the condition that $|\gamma_{z}|\le 2dc\log{t}$ for any $z\in\partial^-[K_i\cap\Z^d]$, \eqref{sports} is bounded from below by
        \begin{equation}
         \begin{split}
        \frac{1}{\sharp \partial^-[K_i\cap \Z^d]}\P(\tau_e<F^-+\delta/2)^{2dc\log{t}} \P(A_I\cap W\cap \{\text{$\alpha_i$ is black for $\tau$}\}).
        \end{split}
       \end{equation}
           
       Thus  if $c$ is sufficiently small depending on $\e$ and $\delta$, \eqref{2.7} is bounded from below by 
        
       \begin{equation}
      \begin{split}
       &\frac{1}{k+1}\frac{1}{\sharp \partial^-[K_i\cap \Z^d]}\P(\tau_e<F^-+\delta/2)^{2dc\log{t}}\sum_{\sharp I=k}\sum_{i\notin I}\P(A_I\cap W\cap\{\text{$\alpha_i$ is black for $\tau$}\})\\
        & \ge t^{-3\e /4}\sum_{\sharp I=k}\E[\sharp\{i\notin I|~\text{$\alpha_i$ is black for $\tau$}\}; A_I\cap W].
        \end{split}
       \end{equation}
       Since $\sharp\{i\notin I|~\text{$\alpha_i$ is black for $\tau$}\}\ge t^\e/2-k$ on the event $A_I\cap W$, this is further bounded from below by
       \begin{equation}
         t^{-3\e /4} \left(\frac{t^\e}{2}-k\right)\P(A^k\cap W)\ge t^{\e/8}\P(A^k\cap W),\end{equation} as desired.
  \end{proof}
        \section{Proof of Theorem \ref{thm:shape2}}
        We take $K>0$ so that $2\E[\tau_e](|x|_1+8)< t$ for any $x\in \frac{t}{K}\B_d$ and $t>1$. We first consider the case
        $$\frac{t}{2K}\B_d \cap L(\theta,r/2)\subset \Gamma.$$ Comparing this case with \eqref{condition-shape}, since \eqref{condition-shape} was used only to get \eqref{apart} and the finite exponential moment condition was used only in \eqref{simply-case} and Lemma~\ref{black-prob}, the exactly same proof works.\\

        Next, we suppose that $\frac{t}{2K}\B_d \cap L(\theta,r/2)\not\subset \Gamma$. In the proof of Theorem~\ref{thm:shape}, we use the finite exponential moment condition to prove  \eqref{large-dev}. We modify \eqref{large-dev} as follows. By Lemma \ref{convex:full}, if $t$ is sufficiently large, $ \left[\frac{t}{K}\B_d \cap L(\theta,r) \backslash \Gamma^{+}_{\log{t}}\right]\cap\Z^d$ is non-empty and we take an arbitrary vertex $x$ of this set. If $T(0,x)\le t$, then $x\in B(t)$, which yields that $B(t) \cap L(\theta,r) \not\subset \Gamma^{+}_{\log{t}}$. Now we consider $2d$ disjoint paths $\{r_i\}^{2d}_{i=1}$ from $0$ to $x$ so that $$\max\{|r_i||~i=1,\cdots,2d\}\leq |x|_1+8$$ as in \cite[p 135]{Kes86}.  Then, it follows from the Chebyshev inequality that there exists $C>0$ such that  
   \begin{equation}\label{fell}
     \begin{split}
       \P\left( T(0,x)> t\right)& \leq \prod^{2d}_{i=1} \P\left(T(r_i)> t\right)\\
       &\leq \prod^{2d}_{i=1} \P\left(|T(r_i)-\E[T(r_i)]|> t/2\right)\\
       &\leq \prod^{2d}_{i=1} \left((t/2)^{-2m}(|x|_1+8)^m \E[\tau_e^{2m}]\right) \leq Ct^{-2dm}. 
  \end{split}
   \end{equation}
   It yields that
   \begin{equation}\label{conclusion2}
     \begin{split}
       \P({\rm F}_{L(\theta,r)}(B(t),\Gamma)\le \log{t})\le \P(B(t) \cap L(\theta,r) \subset \Gamma^{+}_{\log{t}})\\
       \le \P(T(0,x)>t)\le Ct^{-2dm}.
  \end{split}
   \end{equation}
\section*{Acknowledgements}
 The author would like to express his gratitude to Yohsuke T. Fukai for useful comments on the theoretical and experimental researches of the shape fluctuation in physics. Thanks also go to Masato Takei for introducing him the idea of Theorem~2 in \cite{Zhang06}. This research is partially supported by JSPS KAKENHI 16J04042.


\begin{thebibliography}{2}
  
  \bibitem{Alex97}
  Kenneth S. Alexander. \newblock Approximation of subadditive functions and convergence rates in limiting-shape results. 
\newblock {\em Ann. Probab.} 25, 30-55, 1997

  
  \bibitem{ADH}
  A. Auffinger, M. Damron, J. Hanson,
\newblock 50 years of first-passage percolation. 
\newblock {\em University Lecture Series, 68. American Mathematical Society, Providence, RI}, 2017

\bibitem{AJ15}
  J. Anderson. J. Johansson. \newblock The probability density function tail of the Kardar-Parisi-Zhang equation in the strongly non-linear regime
  \newblock {\em Journal of Physics A: Mathematical and Theoretical,} 49(50), 2016

  

   
 \bibitem{CKNPS}
Hugo Duminil-Copin, Harry Kesten, Fedor Nazarov, Yuval Peres and Vladas Sidoravicius. \newblock     On the number of maximal paths in directed last-passage percolation. 2018, ArXiv e-print 1801.05777

    
  \bibitem{BK93}
J. van den Berg and H. Kesten. \newblock Inequalities for the time constant in
first-passage percolation.
\newblock {\em Ann. Appl. Probab.} 56-80, 1993




\bibitem{Chatt17}
Sourav Chatterjee \newblock A general method for lower bounds on fluctuations of random variables, 2017, {\em To appear in Ann. Probab.}

\bibitem{CD81}
J. Cox and R. Durrett. \newblock Some Limit Theorems for Percolation Processes with Necessary and Sufficient Conditions.
\newblock {\em Ann. Probab.} 9, 583-603, 1981

\bibitem{CD90}
J. Cook, B. Derrida. \newblock Directed polymers in a random medium: 1/d expansion and the h-tree approximation. \newblock {\em J. Phys. A} 23, 1523-1554, 1990





\bibitem{HH89}
  T. Halpin-Healy. \newblock Diverse manifolds in random media. \newblock {\em Phys Rev. Lett.} 62, 442-445, 1989
  
  \bibitem{HW65}
    J. M. Hammersley and D. J. A. Welsh, \newblock First-passage percolation, subadditive processes, stochastic networks and generalized renewal theory, in Bernoulli, Bayes, Laplace Anniversary Volume (J. Neyman and L. Lecam, eds.), \newblock {\em Springer-Verlag, Berlin and New York.} 61-110, 1965

     \bibitem{Joha00} Kurt Johansson. \newblock Shape Fluctuations and Random Matrices.
   {\em Comm. Math. Phys.} 209 (2), 437-476, 2000

   
    
 \bibitem{King68} J. F. C. Kingman. \newblock The ergodic theory of sub additive stochastic processes.
   {\em J. Roy. Statist. Soc. Ser.} 30, 499-510, 1968


   
\bibitem{Kes86}
  Harry Kesten. \newblock Aspects of first passage percolation. In Lecture Notes in Mathematics.  vol. 1180, 125-264, 1986
  
  \bibitem{Kes93}
  Harry Kesten. \newblock On the speed of convergence in first-passage percolation.
{\em Ann. Appl. Probab.} 3, 296-338, 1993 
  
\bibitem{NP95}
  C. Newman and M. Piza. \newblock Divergence of shape fluctuations in two dimensions. \newblock {\em Ann. Probab.} 23, 977-1005, 1995
  
\bibitem{NR88}
  T. Natterman and W. Renz. \newblock Interface roughening due to random impurities at low temperatures. \newblock {\em Phys. Rev. B} 38, 5184-5187, 1988

  \bibitem{PP94}
 R. Pemantle, Y. Peres.
 \newblock Planar first-passage percolation times are not tight.
 \newblock Probability and phase transition (Cambridge, 1993), 261-264, 
NATO Adv. Sci. Inst. Ser. C Math. Phys. Sci., 420, Kluwer Acad. Publ., Dordrecht, 1994
 
    \bibitem{Zhang06}
  Yu Zhang. The divergence of fluctuations for shape in first passage percolation. \newblock {\em Probab. Theory. Related. Fields.} 136(2), 298-320, 2006
\end{thebibliography}
\end{document}